\title{The closest to $0$ spectral number of the partial theta function}
\author{Vladimir Petrov Kostov\\ 
Universit\'e de Nice, 
Laboratoire de Math\'ematiques, Parc Valrose,\\ 06108 Nice Cedex 2, France,  
e-mail: kostov@math.unice.fr}
\date{}
\newtheorem{tm}{Theorem}
\newtheorem{rem}[tm]{Remark}
\newtheorem{lm}[tm]{Lemma}
\newtheorem{cor}[tm]{Corollary}
\newtheorem{prop}[tm]{Proposition}
\begin{document} 
\maketitle 

\begin{abstract}
The {\em spectrum} of the partial theta function 
$\theta :=\sum _{j=0}^{\infty}q^{j(j+1)/2}x^j$ is the set of values of 
$q\in \mathbb{C}$, $0<|q|<1$, for which $\theta (q,.)$ has a multiple zero. 
We show that the only element of the spectrum belonging to the disk 
$\mathbb{D}_{0.31}$ is $0.3092493386\ldots$. 
\end{abstract}

\section{Introduction}

The {\em partial theta function} is the sum of the 
bivariate series $\theta (q,x):=\sum _{j=0}^{\infty}q^{j(j+1)/2}x^j$. 
For any fixed value of the parameter $q$ from the open unit disk it is 
an entire function in the complex variable $x$.  
(Recall that the Jacobi theta function is the sum of the series 
$\Theta (q,x):=\sum _{j=-\infty}^{\infty}q^{j^2}x^j$ whereas 
$\theta (q^2,x/q)=\sum _{j=0}^{\infty}q^{j^2}x^j$.) 
The function $\theta$ finds applications in the theory 
of (mock) modular forms (see \cite{BrFoRh}), in  Ramanujan type $q$-series 
(see \cite{Wa}), 
in asymptotic analysis (see \cite{BeKi}), in statistical physics 
and combinatorics (see \cite{So}), and also in questions concerning 
hyperbolic polynomials 
(i.e. real polynomials with all zeros real, see 
\cite{Ost}, \cite{KaLoVi}, 
\cite{KoSh} and \cite{Ko2}). These questions 
have been considered by Hardy, Petrovitch and Hutchinson 
(see \cite{Ha}, \cite{Hu} and \cite{Pe}). For more facts about $\theta$,  
see also~\cite{AnBe}.

The {\em spectrum} of $\theta$ (defined by B. Z. Shapiro in \cite{KoSh}) 
is the set of values of $q$ for which 
$\theta (q,.)$ has a multiple zero in $x$. Properties of $\theta$ and its 
spectrum are studied in \cite{Ko1}, \cite{Ko2}, \cite{Ko3}, \cite{Ko4}, 
\cite{Ko5}, \cite{Ko7}  
and~\cite{Ko8}. Thus if $q\in (0,1)$, $x\in \mathbb{R}$, for the 
corresponding spectral numbers $0<\tilde{q}_1<\tilde{q}_2<\cdots <1$ one has 
$\tilde{q}_j=1-\pi /2j+(\log j)/8j^2+O(1/j^2)$ 
and the respective double zeros of 
$\theta$ are of the form $y_j=-e^{\pi}e^{-(\log j)/4j+O(1/j)}$ (see \cite{Ko8}). 
If 
$q\in (-1,0)$, $x\in \mathbb{R}$, there are the spectral numbers 
$-1<\cdots <\bar{q}_{k+1}<\bar{q}_k<0$, $\bar{q}_k=1-\pi /8k+o(1/k)$ 
and the double zeros $\bar{y}_k$, $|\bar{y}_k|\rightarrow e^{\pi /2}$, 
see \cite{Ko7}. The spectral number 
$\tilde{q}=\tilde{q}_1:=0.3092493386\ldots$ 
is connected with {\em section-hyperbolic polynomials}, see~\cite{KoSh}. The 
function $\theta (\tilde{q},.)$ has a single double zero at 
$-7.5032559833\ldots$, its other zeros are real negative and simple.
We denote by $\mathbb{D}_a$ a closed disk centered at $0$ and of radius $a>0$ 
(in the $x$- or $q$-space). Subscripts indicate derivations 
(e.g. $\theta _x=\partial \theta /\partial x$). 
The zeros $\xi _j$ of $\theta$ can be expanded in Laurent series in 
$q$, $\xi _j=-1/q^j+O(q^{j(j-1)/2})$, convergent for 
$q\in \mathbb{D}_{\rho _j}\backslash 0$, 
$\rho _j\geq 0.108$, see \cite{Ko4} and~\cite{Ko6}; 
for $q\in \mathbb{D}_{0.108}$, 
$\theta (q,.)$ has no multiple zeros.

\begin{tm}\label{tmmain}
(1) In $\mathbb{D}_{0.31}$,  
$\tilde{q}$ is the only spectral 
number of $\theta$.

(2) For any disk $\mathbb{D}_a$, $a\in (0,1)$, there exists $b>0$ such that 
$\theta$ has no multiple zeros in 
$B_{a,b}:=\mathbb{D}_a\times (\mathbb{C}\backslash \mathbb{D}_b)$, it has only 
isolated spectral numbers for $q\in \mathbb{D}_a$, and in 
$E_{a,b}:=\mathbb{D}_a\times \mathbb{D}_b$ it has at most finitely-many 
multiple zeros. 

(3) The sequence $\rho _j$ tends to $1$ as $j$ tends to $\infty$.
\end{tm}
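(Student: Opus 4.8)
The plan is to deduce (3) from (2). Fix $a\in(0,1)$ and let $b=b(a)>0$ be the radius furnished by (2). I first record the two structural facts that (2) hands me for the closed disk $\mathbb{D}_a$: every multiple zero of $\theta$ with $q\in\mathbb{D}_a$ satisfies $|x|\le b$ (there are none in $B_{a,b}$), and the total number of such multiple zeros is finite (finitely many in $E_{a,b}$, none outside). In particular only finitely many values $q^{(1)},\dots,q^{(s)}\in\mathbb{D}_a$ are spectral, and over them $\theta$ has only finitely many multiple zeros $(q^{(l)},x^{(l,t)})$.

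Next I pass from multiple zeros to the Laurent branches. For $q$ near $0$ the zeros $\xi_j=-q^{-j}+O(q^{j(j-1)/2})$ are pairwise distinct with well-separated moduli, and by the cited bound $\rho_j\ge 0.108$ each $\xi_j$ is analytic on a fixed punctured neighbourhood of $0$; since the $\xi_j$ do not coalesce there, the monodromy of the family around $0$ is trivial. The only possible singularities of a branch $\xi_j$ on $\mathbb{D}_a\setminus\{0\}$ are the branch points produced by collisions of zeros, i.e. the finitely many multiple zeros above. At each point $(q^{(l)},x^{(l,t)})$ the entire function $\theta(q^{(l)},\cdot)$ has a zero of finite multiplicity, so only finitely many branches can merge there; collecting these over the finite list of multiple zeros yields a finite set $I_a$ of indices. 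Hence for every $j\notin I_a$ the branch $\xi_j$ meets no singularity in $\mathbb{D}_a\setminus\{0\}$: continued from $0$ it remains a simple zero throughout, is single-valued there (trivial monodromy around the only puncture), and its Laurent series therefore converges on all of $\mathbb{D}_a\setminus\{0\}$. Thus $\rho_j\ge a$ for all $j>\max I_a$.

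Since $a\in(0,1)$ was arbitrary, this gives $\liminf_{j\to\infty}\rho_j\ge 1$. For the matching upper bound one checks that every branch $\xi_j$ does acquire a branch point inside the open unit disk—this is already visible for the real spectrum, whose values in $(0,1)$ and in $(-1,0)$ accumulate only at $|q|=1$ while lying strictly inside it, the collision index tending to $\infty$ as the spectral value tends to the boundary—so $\rho_j\le 1$ for each $j$. Combining the two bounds yields $\rho_j\to 1$.

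I expect the genuine difficulty to lie in the middle paragraph: turning the \emph{geometric} finiteness supplied by (2) (finitely many multiple zeros as points of $\mathbb{D}_a\times\mathbb{C}$) into the \emph{combinatorial} finiteness I actually need (finitely many branch \emph{indices} affected). This requires tracking each global branch $\xi_j$, defined by its behaviour at $0$, along paths in $\mathbb{D}_a\setminus\{0\}$ and identifying, via the local Puiseux factorisation at each $(q^{(l)},x^{(l,t)})$, exactly which indices can participate in a coalescence; the finite multiplicity of each zero of $\theta(q^{(l)},\cdot)$ is what keeps this set finite. The remaining steps—triviality of the monodromy around $0$ and the reduction of $\rho_j\to 1$ to the one-sided estimate $\liminf\rho_j\ge 1$—are routine once this identification is in place.
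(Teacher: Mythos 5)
Your proposal does not prove the theorem: it addresses only part (3), taking part (2) as a black box, and never touches parts (1) and (2). Part (1) is the headline result of the paper, and its proof occupies most of the text: one truncates $\theta$ to $U:=1+qx+q^3x^2+q^6x^3+q^{10}x^4$, bounds the tail $(a_*,b_*)$ via Corollary~\ref{cor2}, and applies Rouch\'e's theorem to the resultant $\tilde{V}_{a,b}$ on the boundary of a square containing $\mathbb{D}_{1/3}$, with numerical estimates on segments, followed by the transversality Lemmas~\ref{lmthetaxxneq0}--\ref{lmtransversal}. Part (2) is proved by comparing $\theta$ with the Jacobi-type function $\Theta^*(q,x)=\sum_{j=-\infty}^{\infty}q^{j(j+1)/2}x^j$, whose zeros are exactly $\mu_k=-q^{-k}$. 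None of this is reproduced or replaced in your attempt, so as a proof of the stated theorem it is fundamentally incomplete.

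Moreover, the deduction $(2)\Rightarrow(3)$ itself has a genuine gap. Analytic continuation of a branch $\xi_j$ along a path in $\mathbb{D}_a\setminus\{0\}$ can fail in two ways: the zero collides with another zero, or the zero escapes to infinity as $q$ tends to an interior point. You rule out only the first. The statement of part (2) says nothing about simple zeros, so it cannot exclude escape; and escape is a real phenomenon for holomorphic families of entire functions --- for $\theta$ itself every $\xi_j\sim -q^{-j}$ escapes to infinity as $q\to 0$, and a family such as $(1-(q-q_0)x)\,\theta(q,x)$ shows that a branch can escape at an interior point while all multiple zeros remain confined. The paper closes this hole not with the statement of (2) but with its \emph{proof}: for $q$ in the closed annulus $\bar{\mathbb{D}}$ and $j\geq j_0$, $\theta$ has a unique zero in $\Omega_j(\delta)$, a small neighbourhood of $\mu_j=-q^{-j}$, and these sets are disjoint; this uniform localization simultaneously yields non-collision, non-escape and single-valuedness, hence meromorphic continuation of $\xi_j$ with a pole only at $0$ and $\rho_j\geq a$ for $j>j_0$. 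Your substitute --- the ``finite set $I_a$ of affected indices'' --- also suffers from the monodromy problem you yourself flag: since there are infinitely many homotopy classes of paths in $\mathbb{D}_a$ punctured at $0$ and at the spectral points, the set of indices some of whose continuations reach a multiple zero is not obviously finite, and you give no argument for its finiteness. (The upper bound $\rho_j\leq 1$, which you argue via the real spectrum, is treated as implicit in the paper and is the least of the issues.)
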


A statement close to part (2) is likely to appear in a text co-authored by 
the present author. The statement is written by one of his co-authors and its 
proof is obtained independently of the one of part (2) of the theorem.
 
\section{Proofs}

\begin{lm}\label{lm02}
There are no spectral numbers for $q\in \mathbb{D}_{c_0}$, 
$c_0:=0.2078750206\ldots$.
\end{lm}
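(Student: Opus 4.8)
The plan is to pass to the variable $u=qx$, in which the partial theta function becomes $\Phi(q,u):=\theta(q,u/q)=\sum_{j=0}^{\infty}q^{j(j-1)/2}u^j=1+u+qu^2+q^3u^3+\cdots$. Since $u=qx$ is an invertible affine change of variable when $q\neq0$, multiplicities of zeros are preserved, so it suffices to show that $\Phi(q,\cdot)$ has no multiple zero for $0<|q|\le c_0$. A multiple zero $u_0$ is characterised by $\Phi(q,u_0)=\Phi_u(q,u_0)=0$; forming the combination $u\Phi_u-\Phi=\sum_{j\ge0}(j-1)q^{j(j-1)/2}u^j$ (whose $j=0,1$ terms are $-1$ and $0$) gives at $u_0$ the clean identity
\[
\sum_{j\ge 2}(j-1)\,q^{j(j-1)/2}u_0^{\,j}=1. \qquad (\mathrm{A})
\]
This is the relation I would exploit, together with $\Phi(q,u_0)=0$, i.e. $1+u_0=-\sum_{j\ge2}q^{j(j-1)/2}u_0^{\,j}$, which I label (B).

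First I would separate scales in order to reduce to a bounded region of $u$. The functional equation $\Phi(q,u)=1+u\,\Phi(q,qu)$ (immediate from the definition) shows that the zeros of $\Phi$ cluster near the well-separated points $-q^{1-j}$, $j\ge1$: on a circle $|u|=R$ the moduli of consecutive terms satisfy $|q^{j(j-1)/2}u^{j}|/|q^{(j-1)(j-2)/2}u^{j-1}|=|q|^{\,j-1}R$, so a single term dominates once $R$ stays away from the transition radii $|q|^{-(j-1)}$. A Rouch\'e comparison on such circles yields exactly one zero, necessarily simple, near each $-q^{1-j}$, with pairwise distinct moduli once $|q|$ is small. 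Hence a multiple zero can only arise from a collision of two \emph{low-index} zeros of $\theta$ (those with the smallest Laurent radii $\rho_j$, recall $\rho_j\ge0.108$); in particular it is confined to a fixed bounded region $|u|\le M$, and the first candidate collision is that of $\xi_1,\xi_2$, i.e. of $u_1\approx-1$ with $u_2\approx-q^{-1}$. The quantitative point is that these Rouch\'e inequalities must hold at every transition radius simultaneously, the binding one being the lowest scale.

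The core of the argument is then to clear the bounded region. There I would take moduli in (A) and (B), majorise each tail $\sum_{j\ge J}(j-1)|q|^{j(j-1)/2}|u|^{j}$ by a convergent geometric-type series (legitimate because $|q|\le c_0<1$ keeps the ratio $|q|^{\,j-1}|u|$ below $1$ on the relevant range of $|u|$), and thereby reduce the two transcendental conditions to a finite, rigorously controlled inequality in the single real parameter $r=|q|$, using that (A) forbids $|u|$ from being too small. Eliminating the constant $1$ between (A) and (B) to get the auxiliary relation
\[
\sum_{j\ge 3}(j-2)\,q^{j(j-1)/2}u_0^{\,j}=2+u_0
\]
is one way to tighten this, since the plain triangle inequality is too lossy near the threshold. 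I expect this bounded-region estimate to be the main obstacle: the deliberately clean (hence somewhat crude) bounds used here close precisely for $r\le c_0$, with $c_0=0.2078750206\ldots$ the value at which they degenerate, which is why $c_0$ falls short of the true merger value $\tilde q$ and sharper work is needed afterwards to reach radius $0.31$. Once the bounded region is excluded, the scale-separation step completes the proof.
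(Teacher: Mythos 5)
Your first half (the scale circles and the single-dominant-term/Rouch\'e comparison) is exactly the mechanism of the paper's proof, but you treat it as a soft step valid ``once $|q|$ is small,'' and this is where the proposal goes wrong: in the paper that step \emph{is} the whole proof, and it is precisely the source of the constant $c_0$. On the circle $|x|=|q|^{-k-1/2}$ (equivalently $|u|=|q|^{1/2-k}$ in your variable) the $j$-th term of $\theta$ has modulus $|q|^{-k^2/2}\,|q|^{(j-k)^2/2}$, so the sum of all competitors of the middle term is at most $|q|^{-k^2/2}\,\phi(|q|)$ with $\phi(r):=2\sum_{\nu\ge 1}r^{\nu^2/2}$, and domination at \emph{every} scale $k$ holds exactly when $\phi(|q|)\le 1$, whose root is $c_0=0.2078750206\ldots$. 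Note that the competitor sum $\sum_{j\ge 0,\,j\ne k}|q|^{(j-k)^2/2}$ \emph{increases} with $k$ toward $\phi(|q|)$, so the binding constraint is at the high scales, not, as you assert, ``the lowest scale''; this inversion is what leads you to believe the high-index zeros are unproblematic and that the real fight is a residual bounded region in $u$. Once the circles are zero-free for all $k$, every zero is trapped alone in its own annulus (using $\xi_j\sim q^{-j}$ near $q=0$ and continuity), hence all zeros are simple and distinct --- there is no bounded region left to clear, and in particular the circle $|x|=|q|^{-3/2}$ already separates $\xi_1$ from $\xi_2$.

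The genuine gap is in what you call the core of the argument. The entire quantitative content of the lemma --- why the threshold is the specific number $0.2078750206\ldots$ --- is deferred to an unexecuted computation: you propose to take moduli in (A) and (B), majorise tails, and you then simply assert that ``the deliberately clean (hence somewhat crude) bounds used here close precisely for $r\le c_0$.'' Nothing in the proposal characterises $c_0$ (in the paper it is the root of $2\sum_{\nu\ge1}q^{\nu^2/2}=1$), and there is no reason the unspecified triangle-inequality estimates on (A)/(B) over an unspecified region $|u|\le M$ would degenerate at exactly that value rather than at some smaller one. So as written the proof establishes nothing beyond ``no multiple zeros for $|q|$ sufficiently small,'' which is already known ($\rho_j\ge 0.108$); to repair it you should drop the (A)/(B) step entirely and instead make your Rouch\'e step quantitative and uniform in the scale index, which is the paper's route and delivers $c_0$ directly.
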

(A result similar to the lemma has been independently 
announced by A. Sokal and J. Forsg{\aa}rd.)
 
\begin{proof}Indeed, set $\phi :=2\sum _{\nu =1}^{\infty}|q|^{\nu ^2/2}$. 
If $|x|=|q|^{-k-1/2}$, $k\in \mathbb{N}$, then  
in the series of $\theta$ the term 
$L:=x^kq^{k(k+1)/2}$ has the largest modulus 
(equal to $|q|^{-k^2/2}$) and the sum $S$ of the 
moduli of all other terms is  
$<|q|^{-k^2/2}\phi (|q|)$. 
One has $1\geq \phi (|q|)$ exactly when 
$|q|\leq c_0$. Hence $|L|>S$ for $|q|\leq c_0$, i.e. the term $L$ is 
{\em dominating}. Thus for no zero $\zeta$ 
of $\theta$ does one have $|\zeta |=|q|^{-k-1/2}$. For any $j$ fixed and 
for $|q|$ close to $0$ 
one has $\xi _j\sim q^{-j}$ 
(see \cite{Ko4}). Hence for 
$|q|\leq c_0$ one has 
$|q|^{-j+1/2}<|\xi _j|<|q|^{-j-1/2}$, 
i.e. all zeros are simple.
\end{proof}

\begin{prop}\label{prprpr}
For $q\in \mathbb{D}_{1/3}$ one has $|q|^{-j+1/2}<|\xi _j|<|q|^{-j-1/2}$, 
$j\geq 3$, hence the zeros $\xi _j$ of $\theta$, $j\geq 3$, 
remain simple and distinct.
\end{prop}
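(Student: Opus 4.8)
The plan is to adapt the dominating-term argument from Lemma~\ref{lm02} so that it applies not to \emph{all} zeros at once, but to each zero $\xi_j$ with $j\ge 3$ separately on the larger disk $\mathbb{D}_{1/3}$. The obstruction to pushing Lemma~\ref{lm02} all the way up to $|q|=1/3$ is that for $|q|\in(c_0,1/3]$ the global sum $\phi(|q|)=2\sum_{\nu\ge1}|q|^{\nu^2/2}$ exceeds $1$, so no single term dominates the entire series simultaneously. But I only need a \emph{localized} domination: to separate the circle $|x|=|q|^{-j-1/2}$ from the zeros, it suffices that on that circle the term $L_j:=x^jq^{j(j+1)/2}$ (of modulus $|q|^{-j^2/2}$) beats the sum of the other terms, and for the circle $|x|=|q|^{-j+1/2}$ the term $L_{j-1}$ dominates. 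Because $j\ge3$ is bounded away from $0$, the neighbouring terms are geometrically small and the tail sums are much smaller than the crude bound $\phi$.

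First I would fix $|x|=|q|^{-k-1/2}$ for a generic index $k$ and compute, exactly as in the lemma, the modulus of the dominant term $|L_k|=|q|^{-k^2/2}$. Then I would bound the sum $S_k$ of the moduli of all remaining terms by splitting it into the terms with index $<k$ and those with index $>k$; each side forms a sum whose ratios of consecutive moduli are powers of $|q|$, and the key point is that the exponents grow \emph{quadratically} away from $k$, so the two tails are dominated by geometric series with ratio at most $|q|$. The condition $|L_k|>S_k$ then reduces to a one-variable inequality in $t:=|q|$ of the form $1>\sum_{m\ge1}(t^{m(m+1)/2-mk}+t^{m(m-1)/2+mk})$ or a similar explicit expression; for $t\le1/3$ and $k\ge 2$ (which is what is needed to certify both bracketing circles for every $j\ge3$) this inequality should hold with room to spare, and I would verify it by a direct estimate of the resulting geometric-type series rather than by the global constant $\phi$.

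Having established that no zero of $\theta$ lies on the circles $|x|=|q|^{-j\pm1/2}$ for the relevant $k$, the annuli $|q|^{-j+1/2}<|x|<|q|^{-j-1/2}$ are separated by zero-free circles. I would then invoke the continuity/argument-principle bookkeeping already implicit in Lemma~\ref{lm02}: from \cite{Ko4} each $\xi_j\sim q^{-j}$ as $q\to0$, so exactly one zero sits in each such annulus for small $|q|$, and since no zero can cross the bracketing circles as $q$ moves within $\mathbb{D}_{1/3}$, the count of one simple zero per annulus persists throughout the disk. This yields $|q|^{-j+1/2}<|\xi_j|<|q|^{-j-1/2}$ for all $j\ge3$, and distinct annuli force the $\xi_j$ to be distinct, hence simple.

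The main obstacle I anticipate is making the localized tail estimate sharp enough to cover the full range up to $|q|=1/3$ for the boundary index $k=2$ (needed for the inner circle of $\xi_3$), since this is precisely where the global argument of Lemma~\ref{lm02} fails. The terms adjacent to $L_k$ contribute the most, and for small $k$ the quadratic growth of the exponents is weakest, so the inequality is tightest there; I expect the argument to require keeping the first one or two neighbouring terms explicitly and bounding only the remainder by a geometric series, rather than lumping everything into a single crude sum. The restriction $j\ge3$ in the statement is almost certainly dictated by exactly this: for $j=1,2$ the corresponding circles may genuinely carry the behaviour that produces the spectral number $\tilde q$ inside $\mathbb{D}_{0.31}$, so those indices must be excluded and handled by the separate analysis that establishes part~(1) of Theorem~\ref{tmmain}.
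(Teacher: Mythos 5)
Your overall architecture (zero-free bracketing circles $|x|=|q|^{-k-1/2}$, $k\geq 2$, plus continuation from small $|q|$ where $\xi _j\sim q^{-j}$, plus the guess about why $j\geq 3$ is needed) matches the paper's. But the core estimate you propose cannot be made to work: the criterion ``$|L_k|$ exceeds the sum of the moduli of all other terms'' is provably \emph{false} on these circles once $|q|>1/4$. On $|x|=|q|^{-k-1/2}$ the term of index $k\pm l$ has modulus exactly $|q|^{l^2/2}\cdot |L_k|$ --- note that this ratio is \emph{independent of} $k$, so your premise that the neighbouring terms become smaller because ``$j\geq 3$ is bounded away from $0$'' is unfounded (and the exponents in your displayed inequality are garbled; the correct ones are $t^{m^2/2}$). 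In particular the two nearest neighbours $j=k\pm 1$ alone contribute $2|q|^{1/2}|L_k|>|L_k|$ for $|q|>1/4$. Localizing in $k$ only removes the terms $l>k$ from the left tail, a correction of size $O(|q|^{(k+1)^2/2})$, so the threshold for a triangle-inequality argument moves from $c_0=0.2078\ldots$ upward by a negligible amount, nowhere near $1/3$. (Also, the localized sum of moduli is \emph{increasing} in $k$, so your claim that small $k$ is the tight case is backwards --- but this is immaterial, since the obstruction is the $l=1$ pair, present for every $k\geq 1$.) Keeping ``the first one or two neighbouring terms explicitly'' does not save the argument as long as you bound them by their moduli.

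What is unavoidably needed, and what the paper actually does, is to use the \emph{phases}. Writing $x=|q|^{-k-1/2}\omega$, $\varepsilon :=\omega q^{k+1/2}/|q|^{k+1/2}=e^{i\beta}$, and factoring out the central term, the normalized series becomes $1+2\sum _{l=1}^k\cos (l\beta )q^{l^2/2}+\sum _{l>k}\varepsilon ^lq^{l^2/2}$: terms equidistant from the index $k$ pair into cosines. The paper keeps $A:=1+2q^{1/2}\cos \beta +2q^2\cos (2\beta )$ with full phase information, bounds the rest crudely by $|B|<2\sum _{l\geq 3}3^{-l^2/2}=0.01456\ldots$, and then proves $|A|\geq 0.0146$ on $\mathbb{D}_{1/3}\setminus \mathbb{D}_{c_0}$ by a case analysis on $\cos \beta$ and $\cos \gamma$ (where $q^{1/2}=|q|^{1/2}e^{i\gamma}$), switching to ${\rm Im}(A)$ in the regime where ${\rm Re}(A)$ can nearly vanish. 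The point is that the two cosine pairs cannot conspire to cancel the constant term $1$: near-cancellation by the first pair forces $|\cos \beta |$ and $|\cos \gamma |$ large, and then $\cos (2\beta )=2\cos ^2\beta -1$ and $\cos(4\gamma)$ are controlled, so either the $q^2$ term or the imaginary part restores a lower bound. Some argument of this kind, exploiting cancellation invisible to moduli, is indispensable on $(1/4,1/3]$, and it is entirely absent from your proposal.
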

 
\begin{proof}
Indeed, one can suppose (see Lemma~\ref{lm02}) that $|q|\geq c_0$. 
As in the proof of that lemma  
we show that if $q\in \mathbb{D}_{1/3}$ and if 
$|x|=|q|^{-k-1/2}$, $k=2$, $3$, $\ldots$, then $\theta (q,x)\neq 0$. Set 
$x:=|q|^{-k-1/2}\omega$, $|\omega |=1$, and 
$\varepsilon :=\omega q^{k+1/2}/|q|^{k+1/2}$; thus  
$|\varepsilon |=1$. Set $\varepsilon =e^{i\beta}$ and 
$\sigma =\omega ^kq^{k(k+1)/2}|q|^{-k(k+1/2)}$. Hence    
$$\theta (q,x)=\sigma \cdot 
\left( 1+\sum _{l=1}^k(\varepsilon ^l+\varepsilon ^{-l})q^{l^2/2}+
\sum _{l=k+1}^{\infty}\varepsilon ^lq^{l^2/2}\right) =\sigma \cdot 
\left( 1+2\sum _{l=1}^k\cos (l\beta )q^{l^2/2}+
\sum _{l=k+1}^{\infty}\varepsilon ^lq^{l^2/2}\right) ~.$$
Set $\theta :=\sigma \cdot (A+B)$, where 
$A:=1+2q^{1/2}\cos \beta +2q^2\cos (2\beta )$ and 
$B:=2\sum _{l=3}^k\cos (l\beta )q^{l^2/2}+
\sum _{l=k+1}^{\infty}\varepsilon ^lq^{l^2/2}$. For $q\in \mathbb{D}_{1/3}$ one has 
$|B|<2\sum _{l=3}^{\infty}3^{-l^2/2}=
0.01456\ldots$. We show that $|A|\geq 0.0146$ for $q\in \mathbb{D}_{1/3}$. 
Set $q^{1/2}:=|q|^{1/2}e^{i\gamma}$ and

$$\begin{array}{lllll} 
R&:=&{\rm Re}(A)&=&1+2|q|^{1/2}(\cos \beta )(\cos \gamma)+2|q|^2(\cos (2\beta ))
(\cos (4\gamma ))\\ \\ 
I&:=&{\rm Im}(A)&=&2|q|^{1/2}(\cos \beta )(\sin \gamma)+
2|q|^2(\cos (2\beta ))
(\sin (4\gamma ))~.\end{array}$$ 
If $|\cos \beta |\leq 1/2$, then 
$|2q^{1/2}\cos \beta +2q^2\cos (2\beta )|\leq 1/\sqrt{3}+2/9<0.8$ and 
$|A|\geq R>0.2$. 
If $|\cos \gamma |\leq 1/2$, then again $R\geq 1-1/\sqrt{3}-2/9>0.2$ hence 
$|A|>0.2$. So suppose that $|\cos \beta |>1/2$ (hence 
$|\cos (2\beta )|<|\cos \beta |$) and $|\cos \gamma |>1/2$. 

If $(\cos \beta )(\cos \gamma )>0$, then $|R|\geq 1-2/9$ and  
$|A|>0.7$. Suppose that $(\cos \beta )(\cos \gamma )<0$. 
Set $g_0:=0.1329058248\ldots$. One has 
$|\sin (4\gamma )|\leq 4|\sin \gamma |$ and 
(as $|q|^{1/2}(1-4|q|^{3/2})$ is decreasing on $[c_0,1/3]$), 
$$|I|\geq 2|q|^{1/2}|(\sin \gamma )(\cos \beta )|(1-4|q|^{3/2})\geq 
2\cdot 3^{-1/2}(1-4/3^{3/2})|(\sin \gamma )(\cos \beta )|> 
g_0|\sin \gamma |~.$$
This is $>0.0146$ for $|\sin \gamma |>0.1098522207\ldots$, i.e. for 
$|\cos \gamma |<0.9939479310\ldots$. 

So suppose that 
$|\cos \gamma |\geq 0.9939479310\ldots$. Hence  
$\cos (4\gamma )\geq 0.904624914\ldots$ and for 
$1/2<|\cos \beta |\leq 1/\sqrt{2}$ one has $-1/2\leq \cos (2\beta )\leq 0$ and 

$$|R|\geq 1-2|q|^{1/2}/\sqrt{2}-2|q|^2\cdot (1/2)\geq 
1-(2/3)^{1/2}-(1/9)>0.07~.$$

If $1/\sqrt{2}<|\cos \beta |\leq 0.85$, then 
$0<\cos (2\beta )\leq 0.445$ and 

$$R>1-2\cdot 0.85\cdot |q|^{1/2}\geq 1-1.7/\sqrt{3}>0.018~.$$ 

If $0.85<|\cos \beta |\leq 0.93$, then 
$0.445<\cos (2\beta )\leq 0.7298$ and 

$$R>1-2\cdot 0.93\cdot |q|^{1/2}+
2\cdot |q|^2\cdot 0.904624914\ldots \cdot 0.445>0.015~.$$

If $0.93<|\cos \beta |\leq 0.98$, then 
$0.7298<\cos (2\beta )\leq 0.9208$ and 

$$R>1-2\cdot 0.98\cdot |q|^{1/2}+
2\cdot |q|^2\cdot 0.904624914\ldots \cdot 0.7298>0.015~.$$

Finally, if $0.98<|\cos \beta |\leq 1$, 
then $0.9208<\cos (2\beta )$ and 

$$R>1-2|q|^{1/2}+2\cdot |q|^2\cdot 0.904624914\ldots 
\cdot 0.9208>0.03~.$$
\end{proof}

\begin{cor}
There are no spectral numbers for $|q|\leq c_1:=0.2256613757$.
\end{cor}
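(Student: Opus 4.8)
The plan is to deduce the corollary from Lemma~\ref{lm02} and Proposition~\ref{prprpr} together with a single extra modulus estimate on the circle $|x|=|q|^{-3/2}$. By Lemma~\ref{lm02} one may assume $c_0<|q|\le c_1$; since $c_1<1/3$, Proposition~\ref{prprpr} applies and yields two facts. First, the zeros $\xi_j$ with $j\ge 3$ are simple and confined to the pairwise disjoint annuli $|q|^{-j+1/2}<|\xi_j|<|q|^{-j-1/2}$, so none of them can meet another zero. Second, rereading the proof of the proposition, the circle $|x|=|q|^{-5/2}$ (the case $k=2$) carries no zero of $\theta$; hence, by the argument principle together with continuity in $q$ (for $|q|$ near $0$ the disk $|x|\le|q|^{-5/2}$ contains precisely $\xi_1,\xi_2$ and nothing else), this disk contains exactly two zeros counted with multiplicity for every admissible $q$. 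Therefore the only multiple zero that $\theta$ could have for $|q|\le c_1$ is a coincidence $\xi_1=\xi_2$ taking place inside $|x|<|q|^{-5/2}$.

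To rule this out, I would show that the circle $|x|=|q|^{-3/2}$, which is the case $k=1$ left untreated by Proposition~\ref{prprpr}, is itself free of zeros for $|q|\le c_1$. Writing $\theta=\sigma\cdot(A+B)$ exactly as in that proof, now with $A:=1+2q^{1/2}\cos\beta$ (the contributions of the indices $j=0,1,2$) and $B:=\sum_{l\ge 2}\varepsilon^lq^{l^2/2}$, so that $|B|\le\sum_{l\ge 2}|q|^{l^2/2}$, it is enough to establish $|A|>|B|$ on the whole circle. Granting this, the disk $|x|<|q|^{-3/2}$ contains exactly one zero --- namely $\xi_1$, since $|\xi_1|\sim|q|^{-1}<|q|^{-3/2}<|q|^{-2}\sim|\xi_2|$ for small $|q|$ --- so the annulus $|q|^{-3/2}<|x|<|q|^{-5/2}$ contains the single remaining zero $\xi_2$. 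Thus $\xi_1\ne\xi_2$, both are simple, and $\theta$ has no multiple zero, which is the assertion of the corollary.

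The main obstacle is precisely the inequality $|A|>|B|$ on $|x|=|q|^{-3/2}$, and it is there that the exact value of $c_1$ is forced. The crude bound $|A|\ge 1-2|q|^{1/2}$ (valid since $c_1<1/4$) paired with $|B|\le|q|^2+|q|^{9/2}+\cdots$ reduces to $2|q|^{1/2}+\sum_{l\ge 2}|q|^{l^2/2}<1$, whose threshold lies strictly below $c_1$; moreover this crude bound cannot be improved by changing the radius, since on $|x|=|q|^{-3/2}$ the two competing terms of largest modulus, the constant term and $q^3x^2$, are exactly balanced (each of modulus $1$), and by the arithmetic--geometric inequality any other radius makes their sum larger. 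To reach $c_1$ one must argue as in Proposition~\ref{prprpr}, splitting into cases according to the sizes of $|\cos\beta|$ and $|\cos\gamma|$ (with $q^{1/2}=|q|^{1/2}e^{i\gamma}$ and $\varepsilon=e^{i\beta}$) and using that the configurations which make $|A|$ smallest and those which make $B$ best aligned against $A$ cannot occur at once: $A$ is close to $1-2|q|^{1/2}$ only when $q$ is nearly real positive, and for such $q$ the leading $l=2$ term of $B$ has positive real part and so fails to cancel $A$. The genuine minimum of $|A+B|$ on the circle is attained at a non-real $q$, and $c_1=0.2256613757\ldots$ is exactly the modulus at which that minimum first reaches $0$. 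Completing this case analysis, which runs parallel to the one already carried out for $k\ge 2$, finishes the proof.
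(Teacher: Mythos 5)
Your reduction is sound, and it is essentially the frame the paper itself uses: by Lemma~\ref{lm02} and Proposition~\ref{prprpr} the only possible multiple zero for $c_0<|q|\le c_1$ is a collision $\xi_1=\xi_2$ inside $|x|<|q|^{-5/2}$, so it suffices to exhibit one zero-free circle separating $\xi_1$ from $\xi_2$ throughout this range of $q$. The gap is that the load-bearing step --- the circle $|x|=|q|^{-3/2}$ carries no zero of $\theta$ for all $|q|\le c_1$ --- is never proved. You correctly observe that the plain modulus bound $|A|\ge 1-2|q|^{1/2}$ versus $|B|\le\sum_{l\ge2}|q|^{l^2/2}$ fails just short of $c_1$ (its threshold is about $0.2249$), and you then defer to a case analysis ``parallel to the one for $k\ge2$'' which is never carried out; in its place you assert that $c_1=0.2256613757\ldots$ is \emph{exactly} the modulus at which the minimum of $|A+B|$ on your circle first reaches $0$. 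That assertion is both unproven and false: your circle stays zero-free well beyond $c_1$ (for real $q$ the zero $\xi_1$ only crosses $|x|=q^{-3/2}$ near $q\approx 0.29$, since $\theta(q,-q^{-3/2})=2-q^{-1/2}-q^{3/2}+q^4-\cdots$ changes sign there; and a phase estimate of the very kind you sketch shows that at $|q|=c_1$, whenever $|A|\le\max|B|$ one has $q$ nearly real positive and ${\rm Re}(A+B)>0.09$). So the constant $c_1$ is not explained by your argument, and the inequality that constitutes the entire content of the corollary is left as a sketch.

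Where $c_1$ actually comes from is the paper's proof, which also refutes your side claim that ``the crude bound cannot be improved by changing the radius.'' The paper uses a \emph{fixed} circle $|x|=7.95$, which for all $|q|\in J:=(0.16,c_1)$ lies strictly between $|q|^{-1}$ and $|q|^{-3/2}$; on it the single term $qx$ dominates the sum of the moduli of all other terms, and $c_1$ is precisely the threshold of this dominance: $7.95\,c_1=1+63.2025\,c_1^3+502.46\,c_1^6+3994.6\,c_1^{10}+\cdots$ to the stated precision. Your AM--GM argument balances only the terms $1$ and $q^3x^2$ and ignores the higher ones, whose moduli increase with the radius; because of them the optimal radius is \emph{smaller} than $|q|^{-3/2}$, which is exactly the room the paper exploits. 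With this circle the proof closes in a few lines: $\theta\neq 0$ on $|x|=7.95$ for $|q|\in J$, and a check at $q=0.22$ (simple zeros in $(-21,-19)$ and $(-7,-6)$, both of modulus below $0.22^{-5/2}=44.04\ldots$) identifies them as $\xi_1,\xi_2$, so $|\xi_1|<7.95<|\xi_2|$ persists on all of $J$ --- no case analysis at all. In summary: your framework is correct and your route could in principle be completed, but as written the proof is missing at its crucial point, and the two quantitative claims offered in its place (the provenance of $c_1$, and the impossibility of a better radius) are both incorrect.
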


\begin{proof}
Indeed, for $|q|\in J:=(0.16,c_1)$,  
$|x|=7.95$, 
in the series of $\theta$ the term $qx$ is dominating  
and $\theta \neq 0$ 
for such $(q,x)$. As $\theta (0.22,.)$ has simple zeros 
in $(-21,-19)$ and $(-7,-6)$ and $0.22^{-2.5}=44.04\ldots$, these are 
$\xi _1$ and $\xi _2$, see the proposition, so for $|q|\in J$ one has 
$|\xi _1|<7.95<|\xi _2|$ and all zeros 
of $\theta$ are simple.
\end{proof}

\begin{cor}\label{cor2}
For $q_*\in \mathbb{D}_{1/3}$, if $\theta (q_*,.)$ has a multiple zero at $x_*$, 
then $|x_*|\leq \gamma :=10.28693902\ldots$.
\end{cor}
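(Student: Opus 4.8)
The plan is to first localize the multiple zero by means of the Proposition, and then to sharpen the localization using the two equations $\theta(q_*,x_*)=\theta_x(q_*,x_*)=0$ that characterize a double zero.

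First I would invoke Proposition~\ref{prprpr}: for $q\in\mathbb{D}_{1/3}$ the zeros $\xi_j$ with $j\ge 3$ are simple, distinct and confined to pairwise disjoint annuli, while $\theta$ does not vanish on the circle $|x|=|q|^{-5/2}$, inside which (by the dominating-term count, as in Lemma~\ref{lm02}) it has exactly two zeros, namely $\xi_1$ and $\xi_2$. Hence a multiple zero cannot involve any $\xi_j$ with $j\ge 3$; it must be the confluence of $\xi_1$ and $\xi_2$, so $|x_*|<|q_*|^{-5/2}$. By the preceding corollary there are no spectral numbers for $|q|\le c_1$, so one also has $c_1<|q_*|<1/3$. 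This already confines $x_*$ to a bounded region, but the resulting estimate $|q_*|^{-5/2}<c_1^{-5/2}$ is far too weak, and the rest of the argument is devoted to replacing it by $\gamma$.

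Next I would exploit both defining relations. Eliminating the term $qx$ by forming $x\theta_x-\theta=0$ gives the scalar identity $1=\sum_{j\ge2}(j-1)q^{j(j+1)/2}x^j=q^3x^2\bigl(1+2q^3x+3q^7x^2+\cdots\bigr)$, and eliminating the term $q^3x^2$ by forming $x\theta_x-2\theta=0$ gives $2+qx=\sum_{j\ge3}(j-2)q^{j(j+1)/2}x^j$. The first identity shows that $q^3x_*^2$ is close to $1$, i.e. that the confluence occurs at the scale $|x_*|\asymp|q_*|^{-3/2}$, which lies deep inside the disk $|x|<|q_*|^{-5/2}$. I would then convert this into a rigorous inequality: using $|q_*|<1/3$ together with the localization to bound the parenthetic tail $1+2q^3x+3q^7x^2+\cdots$ from below (and the analogous tail in the second identity), one obtains an explicit decreasing bound $|x_*|\le F(|q_*|)$; since $|q_*|>c_1$, this yields $|x_*|\le F(c_1)=\gamma$.

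The hard part will be the passage from the weak a~priori bound $|x_*|<|q_*|^{-5/2}$ to the correct scale $|q_*|^{-3/2}$. Near the confluence the monomials $1$, $qx$ and $q^3x^2$ are of comparable size, so no single term dominates and the crude triangle-inequality estimate of the tails, fed only by $|x_*|<|q_*|^{-5/2}$, diverges (the bounding series $\sum_{m\ge1}(m+1)|q|^{m^2/2}$ already exceeds $1$ at $|q|=1/3$). The remedy is to use the two identities jointly and bootstrap the localization, or to split into real and imaginary parts and track them as in the proof of Proposition~\ref{prprpr}; the worst case of the resulting optimization sits at $|q_*|=c_1$, which is precisely where the constant $\gamma=10.28693902\ldots$ is produced.
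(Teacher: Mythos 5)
Your first step---that a multiple zero in $\mathbb{D}_{1/3}$ can only be a confluence of $\xi _1$ and $\xi _2$, with $c_1<|q_*|\leq 1/3$---agrees in substance with the paper, which cites Theorem~1 of \cite{Ko2} together with Proposition~\ref{prprpr} for exactly this. The quantitative step, however, contains a genuine gap, and you acknowledge it yourself: near the confluence the monomials $1$, $qx$, $q^3x^2$ are of comparable modulus, so your elimination identities $x\theta _x-\theta =0$ and $x\theta _x-2\theta =0$ admit no dominating-term estimate, and the proposed remedies (``bootstrap the localization'', ``split into real and imaginary parts as in Proposition~\ref{prprpr}'') are never carried out. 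A plan whose announced hard part is left open is not a proof. Nor would your scheme, even if completed, be tied to the specific number $\gamma =10.28693902\ldots$: asserting that the worst case of an unspecified optimization ``sits at $|q_*|=c_1$'' does not produce that constant.

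The idea you are missing is global rather than local: the identity $\sum _{j=1}^{\infty}1/\xi _j=q$, quoted in the paper from \cite{Ko5}. A double zero $x_*$ accounts for two terms of this sum, and Proposition~\ref{prprpr} bounds the moduli of all the remaining zeros from below by $|\xi _j|\geq |q_*|^{-j+1/2}$, $j\geq 3$, giving
$$\frac{2}{|x_*|}\;\geq \;|q_*|-\sum _{j=3}^{\infty}\frac{1}{|\xi _j|}\;\geq \;
|q_*|-\sum _{j=3}^{\infty}|q_*|^{\,j-1/2}\;=\;|q_*|-\frac{|q_*|^{5/2}}{1-|q_*|}\;=:\;f(|q_*|)~.$$
Since $f$ is increasing on $[0,0.35]$ and $|q_*|\geq c_1$ (by the corollary preceding this one), this yields $|x_*|\leq 2/f(c_1)=\gamma$, which is precisely where the constant $\gamma$ comes from. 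This one identity converts the local difficulty you ran into (no dominating term near the confluence) into a two-line global estimate; without it, or a substitute of comparable strength, your argument does not close.
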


\begin{proof}
Indeed, it is $\xi _1$ and $\xi _2$ that coalesce, see Theorem~1 of \cite{Ko2} 
(and its proof therein) and Proposition~\ref{prprpr}. 
As $\sum _{j=1}^{\infty}1/\xi _j=q$ (see \cite{Ko5}) and 
$|\xi _j|\geq |q_*|^{-j+1/2}$, $j\geq 3$, one has 
$2/|x_*|\geq f(|q_*|):=
|q_*|-\sum _{j=3}^{\infty}|q_*|^{j-1/2}=|q_*|-|q_*|^{5/2}/(1-|q_*|)$. Hence 
$|x_*|\leq 2/f(c_1)=\gamma$ ($f$ is increasing 
on $[0,0.35]$).
\end{proof}

\begin{proof}[Proof of Theorem~\ref{tmmain}.] 
The proof is based on the idea to use finite truncations of the 
series of $\theta$ as its approximations. One expects the values 
of $q$ for which these truncations have multiple zeros in $x$ to be close to 
spectral numbers of $\theta$. We use approximations which are explicit 
polynomials. In the estimations below we restrict these polynomials to 
line segments. The restrictions are polynomials in one variable whose real 
and imaginary parts are real polynomials in one variable; in the present paper  
they are of degree at most $10$. To prove that the values of 
such a polynomial $\tilde{P}$, when restricted to 
a given segment of the real axis, are smaller (or larger) 
than a given real number $\tilde{a}$, it suffices to prove that the equation 
$\tilde{P}=\tilde{a}$ has no real solution on the segment (this can be shown 
using MAPLE or Mathematica) and that for at least one point of the segment 
one has $\tilde{P}<\tilde{a}$ (or $\tilde{P}>\tilde{a}$). 

Set $U:=1+qx+q^3x^2+q^6x^3+q^{10}x^4$ and 
$V(q):=$Res$(U,U_x/q,x)/q^{26}=
256q^{10}-192q^7-128q^6+288q^5-60q^4-80q^3+52q^2-12q+1$. Set 
$K:=\{ 0\leq $Re$q,$Im$q\leq 1/3\} \supset \mathbb{D}_{1/3}$ and 
$\partial K:=$''border of $K$''. The only 
zero of $V$ in $K$ is $\lambda _0:=0.309016994374947\ldots$ 
(it is a double one).

If $\theta (q_*,.)$ has a multiple zero $x_*$, then  
$U+a_*$ and $U_x/q+b_*$ have a common zero for 

$$a_*=\varphi (q_*,x_*):=\sum _{j=5}^{\infty}q_*^{j(j+1)/2}x_*^j~~~,~~~ 
b_*=\psi (q_*,x_*):=\sum _{j=5}^{\infty}jq_*^{j(j+1)/2-1}x_*^{j-1}~.$$ 
Consider 
$\tilde{V}_{a,b}(q):=$Res$(U+a,U_x/q+b,x)/q^{26}$ for 
$|a|\leq a_0:=0.0081\ldots =\varphi (1/3,\gamma )$,
$|b|\leq b_0:=0.0119\ldots$ $=\psi (1/3,\gamma )$ 
and $|q|=1/3$, see Corollary~\ref{cor2}. One has 

$$\begin{array}{ll}
\tilde{V}_{a,b}=V+aV_1+a^2V_2+a^3V_3+b^2W_1+b^3W_2+b^4W_3+ab^2W_4~,&{\rm where}
\\ \\  
V_1=768q^{10}-384q^7-256q^6+432q^5-60q^4+34q^2-4q-80q^3~,&\\ \\  
V_2=768q^{10}-192q^7-128q^6+144q^5-27q^4~,&V_3=256q^{10}~,\\ \\  
W_1=-1-16q^5+24q^4+7q-14q^2~,&W_2=-q+4q^2-8q^4~,\\ \\  
W_3=q^4~,&W_4=6q^4-16q^5~.\end{array}$$ 

We show that $|V(q)|>|\tilde{V}_{a,b}(q)-V(q)|~(A)$ 
on $\partial K$. 
By the Rouch\'e theorem,  
$\tilde{V}_{a,b}$ has as many zeros inside $K$ as $V$, i.e. two (counted 
with multiplicity). The real and imaginary parts of the functions 
$V$, $V_1$ etc.  
(denoted by $V^R$, $V^I$ etc.) restricted to each segment 
$K_v^{\pm}:=\{ {\rm Re}q=\pm 1/3, {\rm Im}q\in [-1/3,1/3]\}$ or 
$K_h^{\pm}:=\{ {\rm Im}q=\pm 1/3, {\rm Re}q\in [-1/3,1/3]\}$ 
of $\partial K$ are 
real univariate polynomials. 

On $K_h^{\pm}$ one has 
$$\begin{array}{ll}|V^R|+|V^I|>2~~,&|V_1^R|+|V_1^I|<61.3~~,\\ \\ 
|V_k^R|+|V_k^I|<15.8~~,~~k=2,3~~,&
|W_j^R|+|W_j^I|<21.1~~,~~j=1,2,3,4,\end{array}$$
$$\begin{array}{lllllll}{\rm so}~~\Sigma &:=&
(|V^R|+|V^I|)/\sqrt{2}&-&\sum _{j=1}^3((|V_j^R|+|V_j^I|)(a_0)^j+ 
(|W_j^R|+|W_j^I|)(b_0)^{j+1})&&\\ \\ 
&&&-&(|W_4^R|+|W_4^I|)a_0(b_0)^2&>&0\end{array}$$
from which $(A)$ follows because $|V|\geq (|V^R|+|V^I|)/\sqrt{2}$, 
$|V_j|\leq |V_j^R|+|V_j^I|$ etc. On $K_v^-$ one has:
$$\begin{array}{ll}|V^R|+|V^I|>20~~,&|V_1^R|+|V_1^I|<61.3~~,\\ \\ 
|V_k^R|+|V_k^I|<15.8~~,~~k=2,3~~,&
|W_j^R|+|W_j^I|<21.5~~,~~j=1,2,3,4\end{array}$$
and again $\Sigma >0$. We consider $K_v^+\cap \{$Im$q\in [0,1/3]\}$ 
instead of $K_v^+$ (because $\theta$ is real) and 
we subdivise it into 
$K^0\cup K^1\cup K^2\cup K^3$, where 

$$\begin{array}{ll}K^0=K_v^+\cap \{$Im$q\in [0,0.05]\}~,~& 
K^1=K_v^+\cap \{$Im$q\in [0.05,0.1]\}~,\\ \\  
K^2=K_v^+\cap \{$Im$q\in [0.1,0.2]\}~,~& 
K^3=K_v^+\cap \{$Im$q\in [0.2,1/3]\}~.\end{array}$$ 
On $K^3$, $K^2$, $K^1$ and $K^0$ one has respectively 
$$\begin{array}{ll}
|V^R|+|V^I|>0.14~,~~~~~\, |V_1^R|+|V_1^I|<6.29~,&|V_k^R|+|V_k^I|<3.79~,~~~~
|W_j^R|+|W_j^I|<1.9~,\\  
|V^R|+|V^I|>0.011~,~~~~|V_1^R|+|V_1^I|<0.342~,&|V_k^R|+|V_k^I|<0.626~,~~\, 
|W_j^R|+|W_j^I|<0.47~,\\ 
|V^R|+|V^I|>0.00252~,~|V_1^R|+|V_1^I|<0.042~,&|V_k^R|+|V_k^I|<0.0865~,~
|W_j^R|+|W_j^I|<0.17~,\\ 
|V^R|+|V^I|>0.00044~,~|V_1^R|+|V_1^I|<0.0299~,&|V_k^R|+|V_k^I|<0.0464~,~
|W_j^R|+|W_j^I|<0.083~.\end{array}$$
In each case $\Sigma >0$. Hence $\tilde{V}_{a,b}$ has two zeros  
$q^{(\nu )}\in \mathbb{D}_{1/3}$, $\nu =1,2$ (counted with multiplicity). 
Consider the restrictions of $V^R$, $V^I$, $V_1^R$, $\ldots$ 
to the segment $S:=[0.29+0\cdot i,0.29+i/3]$. We set  

$$\begin{array}{ll}S=S^0\cup S^1\cup S^2\cup S^3\cup S^4~,~~{\rm where}~& 
S^0:=S\cap \{$Im$q\in [0,0.025]\}~,\\ \\  
S^1=S\cap \{$Im$q\in [0.025,0.05]\}~,& 
S^2=S\cap \{$Im$q\in [0.05,0.1]\}~,\\ \\ 
S^3=S\cap \{$Im$q\in [0.1,0.2]\}~,& 
S^4=S\cap \{$Im$q\in [0.2,1/3]\}~.\end{array}$$
On $S^4$, $S^3$, $S^2$, $S^1$ and $S^0$ one has respectively 

$$\begin{array}{ll}
|V^R|+|V^I|>0.14~,~~~\, \, |V_1^R|+|V_1^I|<5.81~,&|V_k^R|+|V_k^I|<3.72~,~~~
|W_j^R|+|W_j^I|<2.05~,\\  
|V^R|+|V^I|>0.015~,~~\, |V_1^R|+|V_1^I|<0.49~,&|V_k^R|+|V_k^I|<0.42~,~~~
|W_j^R|+|W_j^I|<0.47~,\\ 
|V^R|+|V^I|>0.0041~,~|V_1^R|+|V_1^I|<0.083~,&|V_k^R|+|V_k^I|<0.089~,~\, 
|W_j^R|+|W_j^I|<0.2~,\\ 
|V^R|+|V^I|>0.0019~,~|V_1^R|+|V_1^I|<0.0315~,~~&|V_k^R|+|V_k^I|<0.0315\, ,\, 
|W_j^R|+|W_j^I|<0.08~,\\
|V^R|+|V^I|>0.0005~,~|V_1^R|+|V_1^I|<0.024~,~~&|V_k^R|+|V_k^I|<0.015~,~\, 
|W_j^R|+|W_j^I|<0.048~.\end{array}$$
In all cases $\Sigma >0$. Hence in the rectangle 
$\{ $Re$q\in [-1/3,0.29], |$Im$q|\leq 1/3\}$ the functions $\tilde{V}_{a,b}$ 
and $V$ have the same number of zeros, i.e. none.

\begin{rem}\label{rem8841}
{\rm Knowing that there are no spectral values for $|q|\leq 0.29$ one can 
repeat the reasoning of the proof of Corollary~\ref{cor2} to show that 
$|x_*|\leq 2/f(0.29)=:\lambda =8.841250518\ldots$.}
\end{rem}

\begin{lm}\label{lmthetaxxneq0}
There are no zeros of $\theta _{xx}$ for $0<|q|\leq 0.31$ and $|x|\leq \lambda$.
\end{lm}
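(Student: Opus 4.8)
The plan is to show that $\theta_{xx}(q,x)$ cannot vanish in the region $0<|q|\leq 0.31$, $|x|\leq\lambda$ by the same dominating-term philosophy used in Lemma~\ref{lm02} and Proposition~\ref{prprpr}, but now applied to the second derivative. Writing $\theta_{xx}=\sum_{j=2}^{\infty}j(j-1)q^{j(j+1)/2}x^{j-2}$, the leading low-order terms are $2q^3+6q^6x+12q^{10}x^2+\cdots$. The idea is to split $\theta_{xx}=P+R$, where $P$ is a short explicit polynomial truncation (a few terms, say through $x^3$ or $x^4$) and $R$ is the infinite tail, and then prove $|P|>|R|$ throughout the region. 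Since $q$ may be complex, I would restrict to $|q|=0.31$ on the boundary and estimate the tail $R$ by the geometric-type bound $|R|\leq\sum_{j}j(j-1)(0.31)^{j(j+1)/2}\lambda^{j-2}$, which converges rapidly because the exponent $j(j+1)/2$ grows quadratically while $\lambda^{j-2}$ only grows geometrically; this makes the tail numerically tiny.

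First I would factor out the dominant monomial. Dividing by $2q^3$ gives $\theta_{xx}/(2q^3)=1+3q^3x+6q^7x^2+\cdots$; the constant term $1$ suggests that for small $|q|$ the factor is bounded away from $0$. The cleanest route is to bound $|3q^3x+6q^7x^2+\cdots|$ from above by $\sum_{j\geq 3}\binom{j}{2}|q|^{j(j+1)/2-3}\lambda^{j-2}$ evaluated at $|q|=0.31$ and $|x|=\lambda=8.841\ldots$, and check that this sum is strictly less than $1$. If it is, then $|\theta_{xx}/(2q^3)|\geq 1-(\text{tail})>0$, so $\theta_{xx}\neq 0$ on the whole region in one stroke, with no need for a Rouché argument or segment-by-segment polynomial analysis. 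The key numerical fact to verify is that $q^{j(j+1)/2-3}\lambda^{j-2}$ is summably small at $|q|=0.31$: the first correction term has size roughly $3(0.31)^3\lambda\approx 3\cdot 0.0298\cdot 8.84\approx 0.79$, which is already close to $1$, so the constant-$1$ domination is \emph{borderline} and this crude estimate may fail.

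That borderline behavior is the main obstacle. Because $3|q|^3\lambda\approx 0.79$ when $|q|=0.31$, the naive bound $1-\sum(\cdots)$ may come out negative or uselessly small once the $x^2$ and higher terms are added. To handle this I would not merely bound the $x$-coefficient by its modulus but keep the genuine term $1+3q^3x$ together, exploiting that $x$ is a free variable of modulus at most $\lambda$ rather than exactly $\lambda$; in other words I would treat $\theta_{xx}=2q^3(1+3q^3x)+(\text{smaller tail})$ and bound $|1+3q^3x|$ from below using $|3q^3x|\leq 3(0.31)^3\lambda<0.83$, so $|1+3q^3x|\geq 1-0.83=0.17$, then show the remaining tail $6|q|^7\lambda^2+\cdots$ is smaller than $0.17$. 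This reduces everything to checking one convergent numerical inequality at the extremal values $|q|=0.31$, $|x|=\lambda$, which the rapid quadratic decay of $|q|^{j(j+1)/2}$ makes comfortably true for the higher-order terms even though the leading correction is large. If even this is too tight, the fallback is the paper's own technique: restrict $\theta_{xx}$ (or a polynomial truncation of it) to the boundary segments of the square $K$ scaled to radius $0.31$ and verify the domination $|P|>|R|$ segment by segment as real univariate polynomial inequalities checkable in MAPLE, exactly as done for $\tilde{V}_{a,b}$ above.
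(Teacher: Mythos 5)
Your proposal is correct and is essentially the paper's own proof, whose entire argument is precisely the ``crude'' domination you feared might fail: writing $\theta_{xx}/(2q^3)=\sum_{m\geq 0}\tfrac{(m+1)(m+2)}{2}q^{m(m+5)/2}x^m$, the tail at the extremal values $|q|=0.31$, $|x|=\lambda$ sums to about $0.790+0.129+0.005+0.0001+\cdots \approx 0.925<1$ (each term being monotone in $|q|$ and $|x|$), so the constant term $1$ does dominate on the whole region. Your fallback grouping $1+3q^3x$ (lower bound $0.17$ against a remaining tail $\approx 0.135$) is also valid but unnecessary.
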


\begin{proof}
Indeed, $\theta _{xx}/2q^3=\sum _{j=0}^{\infty}(j+1)(j+2)q^{j(j+5)/2}x^j/2$. For 
$|q|\leq 0.31$, $|x|\leq \lambda$, the first term is dominating.
\end{proof}

\begin{lm}\label{lmXXX}
For $0<|q|\leq 0.31$ and $|x|\leq \lambda$ the equalities 
$\theta _x=\theta _q=0$ do not hold simultaneously 
hence the sets $\{ \theta =d\}$ are locally smooth.
\end{lm}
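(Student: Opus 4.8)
The plan is to reduce the smoothness claim to the non-vanishing of the complex gradient $(\theta _q,\theta _x)$: wherever $(\theta _q,\theta _x)\neq (0,0)$ the (holomorphic) implicit function theorem shows that $\{\theta =d\}$ is a smooth curve, so it suffices to prove that the system $\theta _x=\theta _q=0$ has no solution with $0<|q|\leq 0.31$, $|x|\leq \lambda$. If $x=0$ then $\theta _x=q\neq 0$, so I may assume $x\neq 0$ throughout.

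Write $m_j:=q^{j(j+1)/2}x^j$, so that $x\theta _x=\sum _{j\geq 1}jm_j$ and $q\theta _q=\sum _{j\geq 1}(j(j+1)/2)m_j$. The key step is to form the combination
$$G:=2q\theta _q-3x\theta _x=\sum _{j\geq 1}(j(j+1)-3j)m_j=\sum _{j\geq 1}j(j-2)m_j~.$$
These particular integer coefficients are chosen so that the factor $j(j-2)$ \emph{kills the $j=2$ term} and equals $-1$ at $j=1$; thus $G=-qx+\sum _{j\geq 3}j(j-2)m_j$. If $\theta _x=\theta _q=0$ at a point of the region, then $G=0$ there, which forces $qx=\sum _{j\geq 3}j(j-2)q^{j(j+1)/2}x^j$.

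It remains to show that this equality is impossible because its left-hand side dominates. Dividing by $qx\neq 0$ and passing to moduli, it suffices to prove $\sum _{j\geq 3}j(j-2)|q|^{j(j+1)/2-1}|x|^{j-1}<1$. Each summand is increasing in both $|q|$ and $|x|$ (all exponents are positive for $j\geq 3$), so the sum attains its maximum over the region at the corner $|q|=0.31$, $|x|=\lambda$; there the $j=3,4,5$ terms are about $0.671$, $0.146$, $0.007$, and the tail $\sum _{j\geq 6}$ is controlled by a small geometric series, for a total below $0.825<1$. This contradicts $qx=\sum _{j\geq 3}j(j-2)m_j$, so no common zero exists and the gradient never vanishes. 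The main obstacle is that at this corner the monomials $m_1,m_2,m_3$ are of comparable magnitude, so a one-term domination bound applied directly to $\theta _x$ or $\theta _q$ does not succeed; the device that makes the argument go through is precisely the combination $2q\theta _q-3x\theta _x$, whose coefficients simultaneously annihilate the $m_2$ term and leave $m_1=qx$ dominant throughout the region.
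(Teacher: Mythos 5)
Your proof is correct and takes essentially the same approach as the paper: the paper's one-line proof considers $(\theta _q/x-\theta _x/q)/q^2x$, i.e.\ the combination $q\theta _q-x\theta _x=\sum _{j\geq 1}\tfrac{j(j-1)}{2}q^{j(j+1)/2}x^j$, which annihilates the $j=1$ monomial and then shows the leading surviving term (the one coming from $q^3x^2$) is dominating on the region. Your combination $2q\theta _q-3x\theta _x$ is the same device with different coefficients — it annihilates the $j=2$ monomial and keeps $qx$ dominant instead — and your numerical bound ($\approx 0.825<1$) even leaves a bit more margin than the paper's ($\approx 0.92<1$).
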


\begin{proof}
Indeed, $(\theta _q/x-\theta _x/q)/q^2x=
\sum _{j=1}^{\infty}j(j+1)q^{(j-1)(j+4)/2}x^{j-1}/2$ 
and the first term is dominating.
\end{proof}

\begin{lm}\label{lmtransversal}
For $0<|q|\leq 0.31$ and $5.946\leq |x|\leq \lambda$ one has 
$\theta _q\neq 0$.  
For $0<|q|\leq 0.31$, the curves 
$\{ \theta =d\}$ and $\{ \theta _x=0\}$ intersect transversally. 
\end{lm}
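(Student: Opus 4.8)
The plan is to prove both statements by the same \emph{dominant-term} technique already used repeatedly above, namely by writing the relevant series so that a single explicit leading term dominates the tail on the prescribed region $\{0<|q|\le 0.31,\ |x|\le\lambda\}$. For the first assertion I would compute $\theta_q$ termwise. Since $\theta_q=\sum_{j=1}^\infty \tfrac{j(j+1)}{2}q^{j(j+1)/2-1}x^j$, the factor $q^{j(j+1)/2-1}$ forces the $j=1$ term, namely $x$, to be the leading term (it has no positive power of $q$, whereas every later term carries at least $q^2$). First I would fix $|x|$ in the annulus $5.946\le|x|\le\lambda$ and estimate
\[
\Bigl|\theta_q-x\Bigr|\ \le\ \sum_{j=2}^\infty \tfrac{j(j+1)}{2}\,(0.31)^{j(j+1)/2-1}\,\lambda^{\,j},
\]
a convergent numerical series which I expect to be safely below $5.946$, so that $|\theta_q|\ge|x|-|\theta_q-x|>0$ throughout the annulus. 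Thus $\theta_q$ has no zero there.

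For the transversality claim I would recall that two smooth curves $\{\theta=d\}$ and $\{\theta_x=0\}$ in the $(q,x)$-plane intersect transversally at a common point exactly when their gradients are linearly independent there, i.e. when the Jacobian
\[
\Delta\ :=\ \det\begin{pmatrix}\theta_q&\theta_x\\ \theta_{xq}&\theta_{xx}\end{pmatrix}
\ =\ \theta_q\,\theta_{xx}-\theta_x\,\theta_{xq}
\]
is nonzero. Along $\{\theta_x=0\}$ the second term drops out, so transversality reduces to showing $\theta_q\,\theta_{xx}\neq0$ at such points. By Lemma~\ref{lmthetaxxneq0} we already have $\theta_{xx}\neq0$ on the whole region, so it remains only to verify $\theta_q\neq0$ at the intersection points. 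The first part of this lemma handles $|x|\ge 5.946$ directly, while for $|x|<5.946$ I would argue that a point of $\{\theta_x=0\}$ cannot have $|x|$ too small: by the corollaries above the first two zeros $\xi_1,\xi_2$ of $\theta$ are the only ones that can collide, and a critical point of $\theta$ in $x$ must lie between consecutive zeros, whose moduli are bounded below via the estimate $|\xi_j|\ge|q|^{-j+1/2}$ and the relation $\sum 1/\xi_j=q$, forcing $|x|$ to stay in the admissible annulus.

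The main obstacle I anticipate is the small-$|x|$ end of the transversality argument: the clean dominant-term bound on $\theta_q$ needs $|x|\gtrsim 5.946$, so I must independently rule out zeros of $\theta_x$ with $|x|<5.946$ rather than bound $\theta_q$ there directly. This is precisely why the annulus in the lemma starts at $5.946$ and not at $0$. I expect to close the gap by combining Proposition~\ref{prprpr} and Corollary~\ref{cor2}: since for $|q|\le 1/3$ all zeros of $\theta$ with index $j\ge3$ satisfy $|\xi_j|\ge|q|^{-j+1/2}\ge c_1^{-5/2}$, which exceeds $5.946$, any critical point of $\theta$ responsible for a tangency lies between $\xi_1$ and $\xi_2$ and hence has modulus bounded below by a quantity at least $5.946$, placing it back inside the annulus where the dominant-term estimate applies. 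Assembling these pieces gives $\theta_q\neq0$ at every relevant intersection, whence $\theta_q\theta_{xx}\neq0$ and the two curves meet transversally.
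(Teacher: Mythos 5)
Your Jacobian reduction for the transversality claim (on $\{\theta _x=0\}$ the Jacobian equals $-\theta _{xx}\theta _q$, so by Lemma~\ref{lmthetaxxneq0} everything rests on $\theta _q\neq 0$) is exactly the paper's; the two steps you hang on it, however, both have genuine gaps. For the first assertion, in the annulus $5.946\leq |x|\leq \lambda$ the term $x$ is \emph{not} the dominant term of $\theta _q$, and your numerical claim is false: with $\lambda =8.8412\ldots$, the $j=2$ term of your majorant series alone is $3(0.31)^2\lambda ^2\approx 22.5$, and the whole sum is $\approx 36$, nowhere near below $5.946$. Nor is this a fixable overestimate: at the actual point $(q,x)=(0.31,-\lambda )$ one computes $|\theta _q-x|\approx 12.2>|x|\approx 8.84$, so no triangle inequality centred on the first term can succeed. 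The paper instead proves that the \emph{second} term dominates: writing $\theta _q/x=1+3y+\sum _{j\geq 3}j(j+1)q^{(j-1)(j-2)/2}y^{j-1}/2$ with $y:=q^2x$, it shows $3|y|>1+\chi$ (with $\chi$ the sum of moduli of the tail), checking this at $|q|=0.31$ for $|x|=5.946$ and $|x|=\lambda$ and invoking convexity of $\chi$ in $|y|$. The threshold $5.946$ thus plays the opposite role to the one you assigned it: $|x|$ must be \emph{large} so that $3|q^2x|$ beats the constant term $1$, not so that $x$ beats the tail.

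For $|x|<5.946$ your plan aims at a statement that is false, with a tool that does not apply. The Rolle-type principle ``a critical point lies between consecutive zeros,'' and the picture of $\xi _1,\xi _2$ colliding, make sense for real $q$ only; here $q$ runs over a complex punctured disk, and zeros of $\theta _x$ with $|x|<5.946$ do exist: for $q$ near $0.31i$, solving $\theta _x/q=1+2q^2x+3q^5x^2+\cdots =0$ gives $q^2x\approx -0.455-0.097i$, i.e. $|x|\approx 4.8$. (Also, the bound $|\xi _j|\geq |q|^{-j+1/2}$ of Proposition~\ref{prprpr} concerns only $j\geq 3$ and says nothing about where critical points of $\theta (q,\cdot )$ sit.) The missing idea is Lemma~\ref{lmXXX}: for $0<|q|\leq 0.31$ and $|x|\leq \lambda$ the equalities $\theta _x=\theta _q=0$ never hold simultaneously, so at any point of $\{\theta _x=0\}$ with $|x|<5.946$ one gets $\theta _q\neq 0$ for free, which is all the Jacobian argument needs. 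That one-line appeal to Lemma~\ref{lmXXX} is precisely how the paper closes the gap you identified but could not close.
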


\begin{proof}
Consider the Jacobian $J:=\theta _x\theta _{qx}-\theta _{xx}\theta _q$. 
Its restriction to $\{ \theta _x=0\}$ reduces to $-\theta _{xx}\theta _q$. 
By Lemma~\ref{lmthetaxxneq0}, $\theta _{xx}\neq 0$. One has 
$\theta _q/x=1+\sum _{j=2}^{\infty}j(j+1)q^{j(j+1)/2-1}x^{j-1}/2$. We show that the 
second term is dominating for $|q|\leq 0.31$ and $5.946\leq |x|\leq \lambda$ 
which implies $\theta _q\neq 0$. Set 
$y:=q^2x$, $\chi :=\sum _{j=3}^{\infty}|j(j+1)q^{(j-1)(j-2)/2}y^{j-1}/2|$. The sum  
$1+\chi$ is maximal for $|\chi |=0.31$. For $|q|$ fixed the function 
$\chi$ is convex in $|y|$, 
so it suffices to check (for $|\chi |=0.31$) 
that $3y>1+\chi$ for $x=5.946$ and $x=\lambda$ 
which is true. For $|x|<5.946$, if $\theta _q=0$, then by Lemma~\ref{lmXXX}, 
$\theta _x\neq 0$.
\end{proof}

Lemma~\ref{lmthetaxxneq0} implies that each zero of $\theta _x$ being simple, 
it is a function in $q$ continuous in 
$\Lambda :=\mathbb{D}_{0.31}\cap \{ {\rm Re}q\geq 0.29\}$ 
and holomorphic in its interior. Consider its zero $\zeta (q)$ 
which equals $-7.5\ldots$ 
for $q=\tilde{q}$. By Lemma~\ref{lmtransversal} the intersections of the graph 
of the function $x=\zeta (q)$ with the sets $\{ \theta =d\}$ are points. 
For $d=0$ this means that there is only one point in $\Lambda$ 
belonging to the spectrum. This proves part (1) of the theorem.

Prove part (2) (from which part (3) follows). 
The set of multiple zeros of $\theta$ is a locally analytic subset in the space 
$(q,x)$. It cannot be of positive dimension. Indeed, for each $q$ fixed there 
are only isolated multiple zeros (if any) and for $0<|q|\leq 0.108$ there are 
no multiple zeros. Hence the spectrum of $\theta$ consists of isolated points. 
Further we use properties of the 
Jacobi theta function $\Theta (q,x):=\sum _{j=-\infty}^{\infty}q^{j^2}x^j$ which 
imply corollaries about the function 
$\Theta ^*(q,x)=\sum _{j=-\infty}^{\infty}q^{j(j+1)/2}x^j=
\Theta (\sqrt{q},\sqrt{q}x)$. Namely, one has 
$\Theta ^*(q,x)=qx\Theta ^*(q,qx)~(*)$.
and the zeros 
of $\Theta ^*$ are all the numbers $\mu _k:=-q^{-k}$, 
$k\in \mathbb{Z}$ (see the formula for the zeros of $\Theta$ in \cite{Wi} 
or Chapter~X of \cite{StSh}).
Further when we write $x\in \Omega _k(\delta )$ we mean 
$|x-\mu _k|\leq \delta$, 
$\delta >0$.

In a neighbourhood of  
$\mu _k$ we represent the function $\Theta ^*$ in the form
$\Theta ^*(q,x)=M(x+q^{-k})\tau (q,x)$, where $M\in \mathbb{C}\backslash 0$ 
and the series 
$\tau =1+d_1(x+q^{-k})+d_2(x+q^{-k})^2+\cdots$, $d_j\in \mathbb{C}$, 
is convergent. 
Observe that $x=x+q^{-k-s}-q^{-k-s}=-q^{-k-s}(1-q^{k+s}(x+q^{-k-s}))$ and 
$\tau (q,q^sx)=1+d_1q^s(x+q^{-k-s})+d_2q^{2s}
(x+q^{-k-s})^2+\cdots$. 
Set $X:=x+q^{-k-s}$. Using $s$ times equation $(*)$  
we represent $\Theta ^*$ close to $\mu _{k+s}$ in the form  

$$\Theta ^*(q,x)=q^{s(s+1)/2}x^sMq^sX\tau (q,q^sx)=
M(-1)^sq^{s(-2k-s+3)/2}(1-q^{k+s}X)^sX\tau (q,q^sx)~.$$

Assume that for all $s\in \mathbb{N}\cup 0$ the variable $X$ takes 
values in one and the same disk $|X|\leq \delta$. Hence 
as $s\rightarrow \infty$ 
the functions $(1-q^{k+s}X)^s$ and $\tau (q,q^sx)$ tend uniformly 
to $1$. Thus for $|X|\leq \delta$ (i.e. for $x\in \Omega _{k+s}(\delta )$) 
one has 
$\Theta ^*=M(-1)^sq^{s(-2k-s+3)/2}(1+o(1))X$.  
For any $B>0$ there exists $k_0\in \mathbb{N}\cup 0$ such that 
for $k\geq k_0$ one has $|q^{s(-2k-s+3)/2}|\geq B$ for any $s\in \mathbb{N}$. 
Hence there exists $\kappa >0$ such that for $|\eta |\leq \kappa$, 
for $k\geq k_0$ and for any 
$s\in \mathbb{N}$ the equation $\Theta ^*=\eta$ has a unique 
solution $X=X(\eta )$ with $|X|\leq \delta$.

Set $\theta (q,x)=\Theta ^*(q,x)+\Xi (q,x)$, where 
$\Xi (q,x)=-\sum _{j=-\infty}^{-1}q^{j(j+1)/2}x^j$. For any $q\in \mathbb{D}_1$ 
the series of $\Xi$ and $\Xi '$ 
converge for $|x|>1$, and for any $\varepsilon >0$ 
there exists $G\geq 1$ such that $|\Xi|\leq \varepsilon$ and 
$|\Xi '|\leq \varepsilon$ if $|x|\geq G$. (Both series (of 
$1/x$) are without constant term and the moduli of 
all coefficients of $\Xi$ are less than $1$.) This reasoning 
remains valid for $q$ replaced by a sufficiently small 
closed disk $D\subset \mathbb{D}_1$ centered at $q$.

For $k\in \mathbb{N}$ sufficiently large the equation 
$\theta (q,x)=0$, i.e. $\Theta ^*(q,x)=-\Xi (q,x)$
has a unique solution $x=x(q)\in \Omega _k(\delta )$. (For such $k$ 
and for $x\in \Omega _k(\delta )$ 
the equation $\Theta ^*(q,x)=\eta$ 
has a solution $x(q,\eta )=\mu _k+O(\eta )$ holomorphic in $\eta$  
for $|\eta |$ sufficiently small.) Substituting $-\Xi (q,x)$ for $\eta$ 
one obtains an equation 
$x=\mu _k+\Delta (q,x)$; by choosing $G$ and $k_0$ sufficiently large one 
makes $\max _{x\in \Omega _k(\delta)}|\Delta (q,x)|$ and 
$\max _{x\in \Omega _k(\delta)}|\Delta '(q,x)|$ 
arbitrarily small. 
Hence  
this equation has a unique solution in $\Omega _k(\delta)$. This reasoning is 
valid for $q\in D$.

Suppose that $a>c_0$ (for $a\leq c_0$ see Lemma~\ref{lm02}). 
The closure $\bar{\mathbb{D}}$ 
of $\mathbb{D}_a\backslash \mathbb{D}_{c_0}$ 
can be covered by finitely-many disks $D$. Hence there exists 
$j_0\in \mathbb{N}$ such that for $j\geq j_0$ and for 
$q\in \bar{\mathbb{D}}$ the function $\theta$ has a zero 
in $\Omega _j(\delta)$. Choosing, if necessary, a larger $j_0$ one can assume 
that all sets $\Omega _j(\delta)$ are disjoint (for any fixed 
$q\in \bar{\mathbb{D}}$). Hence all zeros $\xi _j$ of $\theta$ with $j\geq j_0$ 
are simple and distinct, 
so 1) one can set $b:=\min _{q\in \bar{\mathbb{D}},j\geq j_0}|\xi _j|$; 
2) one has $\rho _j\geq a$ for $j>j_0$;  
3) for $|q|<a$, $\xi _j$ depends 
meromorphically on $q$ with a pole only at $0$, i. e. its Laurent series is  
convergent in $\mathbb{D}_a$. Letting $a$ tend to $1$ one obtains the proof of 
part (3). 
\end{proof}

\end{document}